\newcounter{moncompteur}
\newtheorem{theorem}[moncompteur]{Theorem}
\renewcommand\leq{\leqslant}
\renewcommand\geq{\geqslant}
\renewcommand\theta{\vartheta}
\renewcommand\epsilon{\varepsilon}
\begin{document}
\selectlanguage{english}
\begin{center}\Large Highest perfect power of a\\ product of integers less than $x$\\
\selectlanguage{french}
\small\bsc{Élie Goudout}\end{center}
\selectlanguage{english}
\begin{abstract}
For $x\geq 3$, we define $w(x)$ as the highest integer $w$ for which there exist integers $m, y\geq 1$ and $1\leq n_1<\dots<n_m\leq x$ such that $n_1\cdots n_m=y^w$. We show that
\[w(x)=x\exp\big(-(\sqrt{2}+o(1))\sqrt{\log x\log\log x}\big).\]
\end{abstract}

\indent\indent\footnotesize \emph{Keywords}: Perfect powers; friable numbers.\\
\indent\indent\emph{MSC}: 11A05, 11N56\\

\normalsize
In~\cite{podibhp}, Ska\l ba defines, for $x\geq 3$, $w(x)$ as the highest integer $w$ for which there exist integers $m, y\geq 1$ and $1\leq n_1<\dots<n_m\leq x$ such that $n_1\cdots n_m=y^w$, and shows
\[x\exp\big(-(\sqrt{2}+o(1))L(x)\big)\leq w(x)\leq x\exp\big(-(\textstyle\frac{1}{\sqrt{2}}\displaystyle+o(1))L(x)\big),\]
where $L(x):=\sqrt{\log x\log\log x}$, as $x\rightarrow+\infty$. In this paper, we improve the upper bound, making the result optimal up to the~$\exp\left(o(L(x)\right))$.

\begin{theorem}
When $x\rightarrow +\infty$,
\[w(x)=x\exp\big(-(\sqrt{2}+o(1))L(x)\big).\]
\end{theorem}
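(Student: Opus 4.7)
Write $w = x\exp(-cL(x))$ and suppose for contradiction that $c<\sqrt{2}$. The plan is to iteratively sharpen the friability bound on $y$ until it forces $y=1$, contradicting the existence of a nontrivial configuration.

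\emph{Basic friability.} For any prime $p\mid y$, the identity $wv_p(y) = \sum_i v_p(n_i)$ combined with $\sum_i v_p(n_i) \leq \sum_{k\geq 1}\lfloor x/p^k\rfloor \leq x/(p-1)$ yields $p \leq x/w + 1 = e^{cL}(1+o(1))$. Since each $n_i$ divides $y^w$, all $n_i$ are $y_0$-friable with $y_0 := P^+(y) \leq e^{cL}(1+o(1))$.

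\emph{Key refinement.} Assume now that the $n_i$ are $e^{c''L}$-friable for some $c''\leq c$, and let $p = e^{sL}$ be a prime dividing $y$. The number of indices $i$ with $p\mid n_i$ is at least $w/\lfloor\log x/\log p\rfloor \geq wsL/\log x$, because $\sum_i v_p(n_i)\geq w$ with each term bounded by $\log x/\log p$. On the other hand, these indices correspond to distinct $e^{c''L}$-friable multiples of $p$ in $[1,x]$, so their number is at most $\Psi(x/p,\,e^{c''L})$. Using the de~Bruijn asymptotic $\Psi(x,e^{c''L}) = x\exp(-L/(2c'')(1+o(1)))$ together with $\rho(u-\delta)/\rho(u) \sim e^{\delta\log u}$ (a consequence of $\rho'/\rho \sim -\log u$, where $u = \log x/(c''L)$ so $\log u\sim\tfrac12\log\log x$), the comparison of the two bounds, after taking logarithms, reduces to $L(s - c + 1/(2c'')) \leq O(\log\log x)$. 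Hence $s\leq c - 1/(2c'') + O(\log\log x/L)$, and therefore $P^+(y) \leq \exp((c - 1/(2c''))L + o(L))$.

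\emph{Iteration and conclusion.} Set $c^{(0)} = c$ and $c^{(n+1)} = c - 1/(2c^{(n)})$. Applying the key refinement inductively, the $n_i$ are $e^{c^{(n)}L}$-friable at every stage, modulo $o(L)$-corrections. The map $c''\mapsto c''+1/(2c'')$ attains its minimum value $\sqrt{2}$ at $c''=1/\sqrt{2}$, so under the hypothesis $c<\sqrt{2}$ the recursion admits no positive real fixed point; the derivative of $c''\mapsto c-1/(2c'')$ at $c''=1/\sqrt{2}$ equals one, making the flow neutral precisely at $c=\sqrt{2}$. A linearization near $c''=1/\sqrt{2}$ gives $c^{(n+1)} - c^{(n)} \sim c - \sqrt{2}$, so that $c^{(N)} \leq 0$ after $N = O(1/(\sqrt{2}-c))$ iterations. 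At that stage $P^+(y) \leq 1$, which forces $y=1$ and $\prod n_i = 1$, the degenerate (excluded) case. The principal technical obstacle is controlling the $O(\log\log x/L)$-error incurred at each step: the accumulated error $O(\log\log x/(L(\sqrt{2}-c)))$ is negligible provided $\sqrt{2}-c \gg \log\log x/L$, which yields $c \geq \sqrt{2} - o(1)$ and concludes the proof.
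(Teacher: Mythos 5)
Your core counting inequality is sound and is in fact the same one the paper uses: for a prime $p\mid y$, at least $w\log p/\log x$ of the $n_i$ are multiples of $p$, and these are $z$-friable multiples of $p$ below $x$, hence number at most $\Psi(x/p,z)$. Where you diverge is in how you exploit it. You keep the prime $p=e^{sL}$ and the friability level $e^{c''L}$ as independent parameters and run the recursion $c^{(n+1)}=c-1/(2c^{(n)})$ until it crosses zero; the paper simply takes $p=q:=P^+(y)$, for which the two parameters coincide ($s=c''$, since every $n_i$ divides $y^{w}$ and is therefore $q$-friable). With that single choice your ``key refinement'' already reads $s\leq c-1/(2s)+o(1)$, i.e.\ $c\geq s+1/(2s)-o(1)\geq\sqrt{2}-o(1)$, and the fixed-point analysis, the count of $O(1/(\sqrt{2}-c))$ iterations and the bookkeeping of accumulated errors all become unnecessary. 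Your iteration does converge to the same threshold (the minimum of $s+1/(2s)$ is exactly what obstructs a positive fixed point when $c<\sqrt{2}$), so the argument is morally correct, but it is strictly harder to make rigorous than the one-step version: the per-step error is not $O(\log\log x/L)$ as you claim but is governed by the precision of the estimate for $\Psi$, roughly $O(\log_3 x/\log_2 x)$ in the exponent coefficient, and you must keep that estimate uniform as $c^{(n)}$ approaches $0$ near the end of the iteration --- there the uniform bound $\Psi(x,z)\ll xu^{-u}+\sqrt{x}$ used in the paper should replace the de~Bruijn asymptotic (the $\sqrt{x}$ term is harmless since $w\gg\sqrt{x}$). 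Neither point changes the conclusion $c\geq\sqrt{2}-o(1)$. Finally, note that the theorem asserts an equality: you only establish the upper bound for $w(x)$; the matching lower bound is Ska\l ba's construction and must at least be invoked, as the paper does.
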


\begin{proof}
For $3\leq z\leq x$, let $\Psi(x,z):=\#\left\{n\leq x\,:\quad P^+(n)\leq z\right\}$ be the number of $z$-friable integers less than $x$. From~\cite{Tenenbaum}, we have the classical estimate
\begin{equation}\label{fri}
\Psi(x,z)\ll xu^{-u}+\sqrt{x}.\hspace{2cm}(u=\log x/\log z)
\end{equation}

From now on, let $x\geq 3$, $m$ and $n_1,\dots,n_m\leq x$ denote integers such that $n_1\cdots n_m$ is of the form $y^{w(x)}$. Let $q:=P^+(y^{w(x)})$ denote the highest prime factor of $y^{w(x)}$ and $v_q(y^{w(x)})$ denote its $q$-valuation. Since the number of $q$-friable integers $n\leq x$ with $q\vert n$ is less than $\Psi(x/q,q)$, we have
\[w(x)\leq v_q(y^{w(w)})\leq\Psi(x/q,q)\frac{\log x}{\log q}.\]
The estimate~$(\ref{fri})$ yields that the maximum of the right member is attained for 
\[q=\exp\big((\textstyle\frac{1}{\sqrt{2}}\displaystyle+o(1))L(x)\big).\]
In this case, we get the desired upper bound, which, together with Ska\l ba's lower bound, gives the result.
\end{proof}
\bibliographystyle{plain-fr}
\bibliography{Bibliographie}

\noindent\bsc{Institut de Math\'ematiques de Jussieu-PRG, Universit\'e Paris Diderot,
Sorbonne Paris Cit\'e, 75013 Paris, France}\\

\noindent\textit{E-mail :} \url{eliegoudout@hotmail.com}

\end{document}